\documentclass{amsart}

\usepackage{amssymb,amsmath,color,ytableau,graphicx,float}
\usepackage{amsthm, url}
\usepackage{url}
\usepackage{tikz}
\usepackage{hyperref}
\usepackage[enableskew]{youngtab}
\usepackage{ytableau}

\newtheoremstyle{preview}
  {6pt}                       
  {6pt}                       
  {\itshape}                  
  {}                          
  {\bfseries}                 
  {}                          
  { }                         
  {\thmname{#1}\thmnumber{ #2}\thmnote{ #3}}

\theoremstyle{preview}
\newtheorem*{previewtheorem}{Theorem}

\definecolor{red}{rgb}{1,0,0}
\definecolor{blue}{rgb}{.2,.2,.8}

\makeatletter
\newcommand*{\encircled}[1]{\relax\ifmmode\mathpalette\@encircled@math{#1}\else\@encircled{#1}\fi}
\newcommand*{\@encircled@math}[2]{\@encircled{$\m@th#1#2$}}
\newcommand*{\@encircled}[1]{%
  \tikz[baseline,anchor=base]{\node[draw,circle,outer sep=0pt,inner sep=.2ex] {#1};}}
\makeatother

\def\mex{\textrm{mex}}
\def\a{\textbf{x}}
\def\b{\textbf{y}}
\def\1{\mathbf{1_\ell}}
\def\ks{\mathbf{k_s}}
\def\m{\mathfrak{mi}}

\newtheorem{theorem}{Theorem}[section]

\newtheorem{corollary}[theorem]{Corollary}

\newtheorem*{theorem*}{Theorem}

\theoremstyle{definition}
\newtheorem{definition}{Definition}
\newtheorem{example}{Example}
\newtheorem{remark}{Remark}

\newcommand{\ds}{\displaystyle}

\title[The number of missing integers]{Identities involving the number of missing integers in partitions - combinatorial proofs}
\author[J. Aniceto]{Joselyne Aniceto}\address{Department of Mathematics \\ St. Mary's University \\ San Antonio, TX 78228, USA \\}
 \email{janiceto@stmarytx.edu} 

\author[C. Ballantine]{Cristina Ballantine}\address{Department of Mathematics and Computer Science\\ College of the Holy Cross \\ Worcester, MA 01610, USA \\} 
 \email{cballant@holycross.edu}

\begin{document}
\allowdisplaybreaks
\subjclass{Primary 11P81, 11P84; Secondary 05A17, 05A19.}

\keywords{integer partitions; missing integers; combinatorial identities}
 
\begin{abstract}
    A \textit{missing integer} in a partition $\lambda$ is a positive integer less than the largest part of $\lambda$ that does not appear as a part in $\lambda$. 
    In the recent paper \textit{On the number of missing integers in partitions}, Bhoria, Eyyunni, and Santra examined the number of partitions (overpartitions) of $n$ with a fixed number of missing integers and established generating functions, identities, and congruences for them. In this article, we provide combinatorial proofs for several of their results. 
\end{abstract}

\maketitle

\section{Introduction}\label{intro}
A partition of a positive integer $n$ is a finite non-increasing sequence of positive integers $\lambda=(\lambda_1 , \lambda_2, ..., \lambda_\ell)$ such that $\sum^\ell_{i=1} \lambda_i =n$.
    The integers $\lambda_i$ are called the parts of the partition $\lambda$.
Integer partitions have long played a central role in enumerative combinatorics, with many partition statistics leading to deep arithmetic identities, generating function formulas, and congruence properties.
Among these statistics is the minimal excludant of a partition $\lambda$ (or $\mex(\lambda)$)  \cite{mex}, defined as the smallest positive integer that does not appear as a part of $\lambda$.  

In \textit{On the number of missing integers in partitions} \cite{missing}, Bhoria, Eyyunni and Santra focus on the entire set of missing positive integers less than the largest part of a partition  rather than the minimal missing part. They consider  the number $P(n,m)$ of partitions of $n$ with exactly $m$ missing integers less than the largest part as well as the number $\overline P(n,m)$ of overpartitions of $n$ with the same property. (An overpartition is a partition in which the first occurrence of a part may be overlined.)
They derived several generating functions, identities  and congruences involving these (over)partition functions. In particular, they prove  \cite[Corollaries 2.2 and 3.2]{missing}
\begin{align}\label{Pgen} \sum_{n=0}^\infty\sum_{m=0}^\infty P(n,m)w^mq^n& = \frac{((w-1)q;q)_\infty}{(wq;q)_\infty}\\ \label{oPgen}\sum_{n=0}^\infty\sum_{m=0}^\infty \overline P(n,m)w^mq^n& = \frac{((w-2)q;q)_\infty}{(wq;q)_\infty}, 
    \end{align}
where $(a;q)_{\infty}$ is the usual Pochhammer symbol \begin{equation*}
    (a;q)_{\infty} := \prod^{\infty}_{j=1} (1-aq^{j-1}), \ \ a,q \in \mathbb C, |q|<1.
\end{equation*}

In the present paper, we find combinatorial interpretations for each of the  products on the right hand side of \eqref{Pgen} and \eqref{oPgen} and give bijective proofs of the obtained identities thus providing combinatorial proofs of \cite[Corollaries 2.2 and 3.2]{missing}.
 Then, we  simplify the bijections in order to give combinatorial proofs for special cases thus proving \cite[Corollaries 2.3, 2.4 and 2.5]{missing} as well as \cite[Corollaries 3.3, 3.4 and 3.5]{missing}. 

In  \cite{missing}, the authors  ask for a bijective proof of the following result which they prove analytically. 
\begin{previewtheorem}[3.5]\cite[Theorem 2.9]{missing} Let $k$ be a positive integer. The number of distinct integers less than or equal to the largest part appearing less than $k$ times in all partitions of $n$ equals the number of parts different from $k$ in all partitions of $n$.
    \end{previewtheorem}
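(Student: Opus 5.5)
We would prove the identity by double counting, turning each side into a difference of two quantities and matching the pieces bijectively. Write $m_j(\lambda)$ for the multiplicity of $j$ in $\lambda$ and $p(n)$ for the number of partitions of $n$. The left-hand side counts pairs $(\lambda,j)$ with $\lambda\vdash n$, $1\le j\le \lambda_1$ and $m_j(\lambda)<k$. The right-hand side counts pairs $(\lambda,c)$ where $c$ is an occurrence of a part different from $k$, so parts are counted with multiplicity. We checked this reading on small cases: for $n=3$, $k=2$ both sides equal $5$.

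\textbf{Step 1 (split the left side).} If $m_j(\lambda)\ge k\ge 1$, then $j$ is a part, so $j\le\lambda_1$ holds automatically. Hence the pairs counted on the left are
\[
\{(\lambda,j): 1\le j\le \lambda_1\}\setminus\{(\lambda,j): m_j(\lambda)\ge k\}.
\]
The first set has size $\sum_{\lambda\vdash n}\lambda_1$. By conjugation, the largest part of $\lambda$ equals the number of parts of $\lambda'$, so this is the total number of parts, with multiplicity, over all partitions of $n$. Bijectively, $(\lambda,j)$ with $j\le\lambda_1$ corresponds to $\lambda'$ with its $j$-th part marked.

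\textbf{Step 2 (split the right side).} The right side equals the total number of parts in all partitions of $n$, minus the number of pairs $(\lambda,c)$ with $c$ an occurrence of the part $k$. Label the copies of $k$ in $\lambda$ as the $1$st through $m_k(\lambda)$-th. Then such pairs correspond to pairs $(\lambda,t)$ with $1\le t\le m_k(\lambda)$.

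\textbf{Step 3 (match the subtracted terms).} Send $(\lambda,j)$ with $m_j(\lambda)\ge k$ to the pair $(\mu,j)$, where $\mu$ is obtained by deleting $k$ copies of $j$; this is a bijection onto $\{(\mu,j):\mu\vdash n-kj,\ j\ge1\}$. Likewise, send $(\lambda,t)$ with $t\le m_k(\lambda)$ to $(\mu,t)$, where $\mu$ is obtained by deleting $t$ copies of $k$; this is a bijection onto $\{(\mu,t):\mu\vdash n-tk,\ t\ge1\}$. These two target sets coincide after renaming $j\leftrightarrow t$. Composing gives an explicit bijection: delete $k$ copies of $j$, then insert $j$ copies of $k$, and mark the $j$-th copy of $k$. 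Both subtracted quantities equal $\sum_{t\ge1}p(n-tk)$.

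\textbf{Step 4 (assemble).} Since both sides are (total number of parts) minus $\sum_{t\ge 1}p(n-tk)$, the identity follows.

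The main obstacle is upgrading this subtraction argument to a genuine bijection between the two sets themselves, which is what the authors of \cite{missing} requested. We would first define a bijection $\Phi$ on the ``total'' sets via conjugation, taking $(\lambda,j)$ to $(\lambda',\text{$j$-th part})$. We would then combine $\Phi$ with the matching $\psi$ of Step 3 between the two excluded subsets and apply the standard involution principle: starting from a left-side pair, apply $\Phi$. If the image is an occurrence of the part $k$, pull it back via $\psi^{-1}$ to an excluded left pair, apply $\Phi$ again, and iterate until the image lands on a part different from $k$. We would need to check that this iteration terminates, which holds because all sets are finite and the maps are bijections. If the iteration proves unwieldy, we would look for a direct description using the observation that conjugation sends the $j$-th part of $\lambda'$ equal to $k$ to the condition $m_j(\lambda)\ge 1$ with a specific column structure.
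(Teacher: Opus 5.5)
Your proof is correct, but it takes a different route from the paper's. You argue by complementation. The left side is $\sum_{\lambda\vdash n}\lambda_1$ minus the number of distinct parts of multiplicity at least $k$, and the right side is $\sum_{\lambda\vdash n}\ell(\lambda)$ minus the number of parts equal to $k$. You match the totals by conjugation and the subtracted terms by Elder's theorem, both being $\sum_{t\ge1}p(n-tk)$.

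The paper remarks that the theorem is a complement to Elder's theorem, but it never subtracts. Instead it decorates the right border $B_\lambda$ so that each side is a disjoint union of classes of marked boxes. The left side is $*,\a,\b$ (missing integers) plus $s,\ks$ (parts of multiplicity between $1$ and $k-1$), and the right side is $\bullet,s,\ell,\1$. It then matches the classes one at a time:
\begin{itemize}
\item $\bullet\leftrightarrow *$ by conjugation;
\item $s$ with itself;
\item $\a\leftrightarrow\ell$ by moving $\lambda_k-j+1$ from each of the first $k$ rows into $k$ new rows;
\item $\b,\ks\leftrightarrow\1$ by two similar Stanley-type moves, distinguished by whether the image has at least $k-1$ parts equal to $k$.
\end{itemize}

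Your route is shorter and more transparent: the theorem becomes ``conjugation plus Elder'' in a few lines, and every ingredient is bijective. Its cost is that it only equates two differences of cardinalities. An actual bijection between the two sets in the theorem therefore requires the involution principle, as you note. That gives an iterated map with no closed form. Its termination rests on injectivity together with the starting pair lying outside the excluded set, not on finiteness alone. The paper's additive decomposition instead yields a direct, one-step bijection between marked boxes, which is what \cite{missing} asked for.
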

    We provide a bijective proof of this theorem.

The remainder of this paper is organized as follows.
In Section \ref{notation}, we establish the notation and preliminary definitions used throughout. 
Section \ref{proofs} is devoted to combinatorial proofs of results from \textit{On the number of missing integers in partitions} \cite{missing}. 
Finally, in Section \ref{conclusion}, we offer some concluding remarks.

\section{Definitions and Notation}\label{notation}

In this section we introduce the notation necessary for the proofs outlined in Section \ref{proofs}.

 As noted in Section \ref{intro},   a \textit{partition} of a positive integer $n$ is a finite non-increasing sequence of positive integers $\lambda=(\lambda_1 , \lambda_2, ..., \lambda_k)$ such that $\sum^k_{i=1} \lambda_i =n$.
    We refer to the integers $\lambda_i$ as the \textit{parts} of $\lambda$.
    The sum of the parts of $\lambda$ is called  the \textit{size} of $\lambda$, denoted  by $|\lambda|$. 
We write $\lambda\vdash n$ to mean that $\lambda$ is a partition of $n$. By convention, the only partition of $0$ is the emptyset. 

We identify a partition with the multiset of its parts and use multiset operations throughout. 
For an integer $a$, we write $a\in \lambda$ if $a$ is a part of $\lambda$, and denote by $m_{\lambda}(a)$  its \textit{multiplicity} in $\lambda$, that is the number of times $a$ occurs as a part in $\lambda$.
The number of parts of $\lambda$, denoted by $\ell (\lambda)$ is called the \textit{length} of $\lambda$.

Let $\mathcal P$ denote the set of all unrestricted partitions and $\mathcal D$ denote the set of all partitions with distinct parts.  For more on the theory of partitions, we refer the reader to \cite{andrews}.

 Given a partition $\lambda=(\lambda_1, \lambda_2, \ldots, \lambda_k)$, the 
    Ferrers diagram of $\lambda$ is a left aligned array of boxes such that the $i$-th row from the top has $\lambda_i$ boxes.  

For example, the Ferrers diagram of $\lambda=(4,4,2,1,1)$ is 

$$\small\ydiagram{4,4,2,1,1}$$ \medskip

We refer to the box in row $i$ and column $j$ as the box in position $(i,j)$. We write $(i,j)\in \lambda$ if there is a box in position $(i,j)$ in the Ferrers diagram of $\lambda$.

    The conjugate $\lambda'$ of a partition $\lambda$ is the partition whose Ferrers diagram is obtained from the Ferrers diagram of $\lambda$  by exchanging rows and columns, i.e., if  $(i,j)\in \lambda$ , then  $(j,i)\in\lambda'$.

To simplify the exposition, $\lambda$ can refer to a partition, its Ferrers diagram, or its multiset of parts. 

Overpartitions were first introduced in \cite{overpartitions}.
An \textit{overpartition} of $n$ is a non-increasing sequence of positive integers  whose sum is $n$ in which the first occurrence of a part may be overlined. For example, the overpartitions of $n=3$ are $$(3), (\overline 3), (2,1), (\overline 2, 1), (2, \overline 1), (\overline 2,\overline 1), (1,1,1), (\overline 1, 1,1).$$
If $\lambda$ is an overpartition, we denote by $\overline \ell(\lambda)$  the number of overlined parts in $\lambda$ and by  $\ell(\lambda)$  the total number of parts in $\lambda$, i.e., the number of overlined and non-overlined parts.
In this paper, a distinct overpartition is an overpartition with distinct parts in which no part may occur both overlined and non-overlined. 
Let $\overline{\mathcal D}$ denote the set of all distinct overpartitions.

We will also make use of \textit{doubly overlined  overpartitions}, in which overlined parts may carry either a single overline or a double overline. 
A doubly overlined distinct overpartition $\lambda$ is a distinct overpartition in which each overlined part is assigned either a single overline or double overline. 
As with distinct overpartitions, each part size occurs at most once. 
Thus, for each positive integer, at most one of $i$, $\overline{i}$, or $\overline{\overline{i}}$ may appear in $\lambda$. 
We denote the set of all doubly overlined distinct overpartitions by $\overline{\overline{\mathcal D}}$.
For example, $(\overline{\overline 5}, 4, \overline 2)$, $(\overline 5, 4, \overline 2)$, $(5, 4,  2)$ are elements of $\overline{\overline{\mathcal D}}$ and are doubly overlined distinct partitions of $11$, whereas $(\overline{\overline 7}, \overline 2, 2)\not\in \overline{\overline{\mathcal D}}$, since there are two  parts of size $2$. 

For $\lambda\in \overline{\overline{\mathcal D}}$, we denote by $\overline\ell(\lambda)$  the number of singly overlined parts in $\lambda$, and by $\overline{\overline\ell}(\lambda)$  the number of doubly overlined parts in $\lambda$. 
As in the case of overpartitions, $\ell(\lambda)$ denotes the total number of parts of $\lambda$.

Finally, if $\lambda\in {\overline{\mathcal D}}$ or $\overline{\overline{\mathcal D}}$, we write $\mathbf i\in \lambda$ to indicate that $\lambda$ contains a part of size $i$, regardless of its overline status; that is, one of $i$, $\overline{i}$, or $\overline{\overline{i}}$ belongs to $\lambda$. 
Likewise, $\mathbf i\not\in \lambda$ means that $\lambda$ contains no part of size $i$, i.e. neither of $i, \overline i, \overline{\overline i}$ is in $\lambda$.

If $(\lambda, \mu)$ is a pair of partitions, overpartitions, or doubly overlined partitions, we write $(\lambda, \mu)\vdash n$  whenever $|\lambda|+|\mu|=n$. 

If $\lambda$ is a partition or an overpartition, the \textit{missing integers} of $\lambda$ are the positive integers less than the largest part of $\lambda$ that do not occur as parts of $\lambda$. Given a partition or an overpartition $\lambda$, we denote by 
$\m(\lambda)$ denote the number of missing integers of $\lambda$.
For example, if  $\lambda=(5,5,3,1)$, then the missing integers are $2$ and $4$, so $\m(\lambda)=2$. 
Similarly, if $\mu=(5,5,\overline 3, 1)$ is an overpartition, the missing integers are also $2$ and $4$, and hence $\m(\mu)=2$. 
For $n,m\geq 0$, we define
$$\mathcal{P}(n,m):=\{\lambda \vdash n \mid \m(\lambda)=m \}.$$ Thus $P(n,m)= |\mathcal{P}(n,m)|$.

The Ferrers diagram of an overpartition $\eta$ is obtained by first drawing the Ferrers diagram of the partition obtained by ignoring the overlines in $\eta$. 
Then, for each overlined part $\overline i$, we shade the rightmost of box in the last row of size $i$.

The conjugate of an overpartition $\eta$ is the overpartition $\eta'$ whose Ferrers diagram is obtained from the Ferrers diagram of $\eta$ by  interchanging rows and columns. If there is a shaded box at the end of a row in $\eta$, there is a shaded box at the end of the corresponding column in $\eta'$. Since shaded boxes only occur in corners, the obtained diagram is the Ferrers diagram of an overpartition. 

\begin{example}
    If $\eta=(\overline 4, 4, 4, \overline 3, 3 ,3, 1,1)$, then $\eta'=(8, \overline 6, 6, \overline 3)$. 
    The Ferrers diagrams are shown below. 
    
    $$\small\ytableausetup{nosmalltableaux}\begin{ytableau}*(white) & *(white) & *(white) &*(white) \\*(white) & *(white) & *(white) &*(white)\\ *(white) & *(white) & *(white) &*(gray)\\ *(white) & *(white) & *(white)\\*(white) & *(white) & *(white)\\ *(white) & *(white)& *(gray)\\ *(white)\\  *(white)
\end{ytableau}\hspace{2cm}
\small\ytableausetup{nosmalltableaux}\begin{ytableau}*(white) & *(white) & *(white) &*(white)& *(white) & *(white) & *(white) &*(white) \\ *(white) & *(white) & *(white) &*(white) & *(white) &*(white)\\ *(white) & *(white) & *(white)& *(white) & *(white) & *(gray)\\ *(white)& *(white) & *(gray)
\end{ytableau}$$ 
$$\eta \hspace{4.5cm} \eta'\ \ \ \ $$
\end{example}

\section{Combinatorial proofs of results in \cite{missing}} \label{proofs}

Throughout the remainder of the paper, we represent the missing integers of a partition or an overpartition $\lambda$ by marking its Ferrers diagram as follows.
 A positive integer $j$ is missing from $\lambda$ if and only there exists $1\leq i\leq \ell(\lambda)$ such that  $(i,j), (i, j+1)\in \lambda $  but  $(i+1, j)\not \in \lambda$. 
We visualize this by placing a $*$ in the bottom cell of every column that is not the rightmost column of its length. 
If  $*$ appears in the position $(i,j)$ in the marked Ferrers diagram of $\lambda$, then $\lambda$ has no part of size $j$. 
\begin{example}
  The marked Ferrers diagram of $\lambda=(9, 7, 6, 6, 3, 2)\in \mathcal P(35, 4)$ is 

$$\young(\hfil\hfil\hfil\hfil\hfil\hfil\hfil*\hfil,\hfil\hfil\hfil\hfil\hfil\hfil\hfil,\hfil\hfil\hfil\hfil\hfil\hfil,\hfil\hfil\hfil**\hfil,\hfil\hfil\hfil,*\hfil)$$ 
There is a $*$ in each of the following positions: $(1, 8), (4, 4), (4, 5), (6,1)$. 
The missing numbers in $\lambda$ are $8, 4, 5, 1$.
\end{example} 

We begin by giving a combinatorial proof of \cite[Corollary 2.2]{missing}. 
We then show that \cite[Corollaries 2.3 and 2.4]{missing} follow immediately by conjugation.

\begin{theorem}\label{cor2.2}\cite[Corollary 2.2]{missing}
    We have \begin{equation}\label{eq:cor2.2} \sum_{n=0}^\infty\sum_{m=0}^\infty P(n,m)w^mq^n= \frac{((w-1)q;q)_\infty}{(wq;q)_\infty}. 
    \end{equation}
\end{theorem}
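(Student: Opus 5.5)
The plan is to turn the right-hand side of \eqref{eq:cor2.2} into a generating function with nonnegative coefficients, read off a partition statistic from it, and then match that statistic with $\m$ by conjugation.

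\emph{Step 1: rewrite the product factor by factor.} Since $((w-1)q;q)_\infty=\prod_{j\ge1}(1+q^j-wq^j)$, each factor of the right-hand side is
$$\frac{1+q^j-wq^j}{1-wq^j}=1+\frac{q^j}{1-wq^j}=1+\sum_{k\ge1}w^{k-1}q^{kj}.$$

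\emph{Step 2: interpret the product.} Expanding $\prod_{j\ge1}\bigl(1+\sum_{k\ge1}w^{k-1}q^{kj}\bigr)$, each $j$ is either absent, or appears as a part with some multiplicity $k=m_\lambda(j)\ge1$ and contributes $w^{k-1}$. So the right-hand side equals
$$\sum_{\lambda\in\mathcal P} w^{\ell(\lambda)-d(\lambda)}q^{|\lambda|},$$
where $d(\lambda)$ is the number of distinct part sizes of $\lambda$. Equivalently, $\ell(\lambda)-d(\lambda)=\sum_j (m_\lambda(j)-1)$, summed over parts $j$ that occur. It therefore suffices to give a bijection $\mathcal P(n,m)\to\{\mu\vdash n:\ell(\mu)-d(\mu)=m\}$.

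\emph{Step 3: conjugation is the bijection.} For any $\lambda$, the integers in $\{1,\dots,\lambda_1\}$ split into those that occur as parts and those that are missing. Hence $\m(\lambda)=\lambda_1-d(\lambda)$. Under conjugation:
\begin{itemize}
\item $\lambda_1=\ell(\lambda')$;
\item $d(\lambda)=d(\lambda')$, since both count the corners of the Ferrers diagram, i.e.\ the number of distinct row lengths equals the number of distinct column lengths.
\end{itemize}
Thus $\m(\lambda)=\ell(\lambda')-d(\lambda')$, and $\lambda\mapsto\lambda'$ is the required size-preserving bijection.

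This is visible in the marked Ferrers diagram. The $*$'s sit in the bottom cells of columns that are not the rightmost column of their length. After conjugation these become the rows of $\lambda'$ that are not the last row of their length, and these are counted by $\sum_j(m_{\lambda'}(j)-1)$.

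\emph{Main obstacle.} The only nontrivial point is spotting the positive rewriting in Step 1. Read naively, $((w-1)q;q)_\infty$ suggests a signed sum, which would force a sign-reversing involution. Once the factor $1+q^j/(1-wq^j)$ is recognized, the remaining steps are routine checks.
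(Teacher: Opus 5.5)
Your proof is correct. It differs from the paper's mainly in how the cancellation coming from the numerator is handled.

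The paper reads the right-hand side as a signed generating function for pairs $(\lambda,\mu)$, where $\lambda$ is a distinct overpartition (each overlined part contributing $-w$) and $\mu\in\mathcal P$. It removes the negative terms with a sign-reversing involution $\varphi_m$. This involution toggles the largest size $j$ that is either overlined in $\lambda$ or occurs in $\mu$ without occurring in $\lambda$. The surviving pairs $\overline{\mathcal E}(n,m)$ are then sent to $(\lambda\cup\mu)'$.

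Your identity $\frac{1+q^j-wq^j}{1-wq^j}=1+\frac{q^j}{1-wq^j}$ performs the same cancellation algebraically, one part size at a time. Your set $\{\nu\vdash n:\ell(\nu)-d(\nu)=m\}$ is exactly $\{\lambda\cup\mu:(\lambda,\mu)\in\overline{\mathcal E}(n,m)\}$: take $\lambda$ to be the distinct parts of $\nu$ and $\mu$ the surplus copies. So your final conjugation step is the paper's $\psi_m$.

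Your route has three advantages. It is shorter, and it yields the explicit formula $\m(\lambda)=\lambda_1-d(\lambda)$. It also makes both the $w=-1$ specialization and the overpartition analogue immediate; in the latter the factor becomes $1+\frac{2q^j}{1-wq^j}$.

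The paper's route avoids any algebraic manipulation of the product, which matters because its stated goal is a purely combinatorial proof. It also gives a combinatorial proof of the signed identity for $P(n,m)$ recorded in the remark following the theorem.
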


\begin{proof} We  first interpret the right hand side of \eqref{eq:cor2.2} combinatorially. 
Let $$\mathcal A(n):=\{(\lambda, \mu)\vdash n \mid  \lambda\in \mathcal D, \,  \mu\in \mathcal P\}.$$  
The coefficient of $q^n$ in the  $q$-series expansion of the right hand side of \eqref{eq:cor2.2}
is given by $$\sum_{(\lambda, \mu) \in \mathcal A(n)}\sum_{k=0}^{\ell(\lambda)} (-1)^k\binom{\ell(\lambda)}{k}w^{k+\ell(\mu)}.$$   
Let $$\overline{\mathcal A}(n):=\{(\lambda, \mu)\vdash n \mid  \lambda\in \overline{\mathcal D}, \,  \mu\in \mathcal P\}.$$
Note that if $(\lambda, \mu)\in \overline{\mathcal A}(n)$, then $\lambda$ can have any number of overlined parts, i.e., $0\leq \overline\ell(\lambda)\leq \ell(\lambda)$. 
Let $$\overline{\mathcal A}(n,m):=\{(\lambda, \mu)\in \overline{\mathcal A}(n)\mid \overline\ell(\lambda)=m-\ell(\mu)\}.$$ 
Then, the coefficient of $w^mq^n$ in  the  series expansion of the right hand side of \eqref{eq:cor2.2} is 
$$\#\{(\lambda, \mu)\in \overline{\mathcal A}(n,m)\mid \overline\ell(\lambda) \text{ even}\}-\#\{(\lambda, \mu)\in \overline{\mathcal A}(n,m)\mid \overline\ell(\lambda) \text{ odd}\}. $$

Let $$\overline{\mathcal E}(n,m):=\{(\lambda, \mu)\in \overline{\mathcal A}(n,m)\mid \ell(\mu)=m \text{ and if }  i\in \mu \text{ then } \mathbf{i}  \in \lambda \}.$$

Note that if $(\lambda,\mu)\in \overline{\mathcal E}(n,m)$, then $\overline\ell(\lambda)=0$.

Next, we define an involution $\varphi_m$ on $\overline{\mathcal A}(n,m)\setminus \overline{\mathcal E}(n,m)$ that changes that parity of  $\overline\ell(\lambda)$.
Let  $(\lambda,\mu)\in \overline{\mathcal A}(n,m)\setminus \overline{\mathcal E}(n,m)$. 

If $\ell(\mu)=m$, then $\overline\ell(\lambda)=0$ and there exists a part $i\in \mu$ such that $ i\not\in \lambda$. Let $t:=\max\{i\in \mu \mid  i\not\in \lambda\}$ and define $\varphi_m(\lambda, \mu):=(\lambda\cup (\overline t), \mu\setminus(t)).$

If $\ell(\mu)<m$, then $\overline\ell(\lambda)\geq 1$. 
Let $\overline u$ be the largest overlined part in $\lambda$. If there is $i\in \mu$, $i>u$, $i\not \in \lambda$, let $t:=\max\{i\in \mu \mid i>u,\,  i\not\in \lambda\}$ and define $\varphi_m(\lambda, \mu):=(\lambda\cup (\overline t), \mu\setminus(t)).$ 
Else, define $\varphi_m(\lambda, \mu):=(\lambda\setminus(\overline u), \mu\cup (u)).$

Since $\varphi_m$ is an involution on $\overline{\mathcal A}(n,m)\setminus \overline{\mathcal E}(n,m)$ that changes the parity of $\overline\ell(\lambda)$, the coefficient of $w^mq^n$ in  the  series expansion of the right hand side of \eqref{eq:cor2.2} equals $|\overline{\mathcal E}(n,m)|$.
We complete the proof by  establishing a bijection $$\psi_m:\overline{\mathcal E}(n,m)\to \mathcal P(n,m).$$ Start with $(\lambda, \mu)\in \overline{\mathcal E}(n,m)$. 
We create the partition $\lambda\cup \mu$ and decorate its Ferrers diagram as follows: for each part $j\in \mu$ we mark  the last box of the first $m_\mu(j)$ rows of length $j$  with $\bullet$. 
Since each part of $\mu$ is a part of $\lambda$, the last row of each fixed size has the last box unmarked. 
We define $\psi_m(\lambda, \mu):=(\lambda\cup \mu)'$. 
To see that $(\lambda\cup \mu)'\in \mathcal P(n,m)$, when conjugating the decorated Ferrers diagram of $\lambda\cup \mu$,  if  the last box of a row is marked with $\bullet$, we mark the last box of the corresponding column with $*$.    
If there is a $*$ in position $(i,j)$ in the $*$-decorated Ferrers diagram for $(\lambda\cup \mu)'$, then $j\not\in (\lambda\cup \mu)'$. 
Hence, there are exactly $\ell(\mu)=m$ missing integers in $(\lambda\cup \mu)'$ and thus $(\lambda\cup \mu)'\in \mathcal P(n,m)$.\end{proof}

\begin{example} Let $(\lambda, \mu)=((6,5,4,2,1),(6,4,4,1))\in \overline{\mathcal E}(33,4) $. 
The decorated Ferrers diagrams for $\lambda\cup \mu$ and $(\lambda\cup \mu)'$ are shown below.  
The missing integers in $(\lambda\cup \mu)'$ are $1,4,5,8$. Thus $(\lambda\cup \mu)'\in \mathcal P(33,4)$.

$$\young(\hfil\hfil\hfil\hfil\hfil\bullet,\hfil\hfil\hfil\hfil\hfil\hfil,\hfil\hfil\hfil\hfil\hfil,\hfil\hfil\hfil\bullet,\hfil\hfil\hfil\bullet,\hfil\hfil\hfil\hfil,\hfil\hfil,\bullet,\hfil) \hspace{2cm} \young(\hfil\hfil\hfil\hfil\hfil\hfil\hfil*\hfil,\hfil\hfil\hfil\hfil\hfil\hfil\hfil,\hfil\hfil\hfil\hfil\hfil\hfil,\hfil\hfil\hfil**\hfil,\hfil\hfil\hfil,*\hfil)$$  
$$\lambda\cup \mu \hspace{4cm} (\lambda\cup \mu)'\ \ \ \ \ $$
\end{example}

\begin{remark} We note that the proof of 
    Theorem \ref{cor2.2} provides a combinatorial argument for the following identity. For $n,m\geq 0$,
    $$P(n,m) = \#\{(\lambda, \mu)\in \overline{\mathcal A}(n,m)\mid \overline\ell(\lambda) \text{ even}\}-\#\{(\lambda, \mu)\in \overline{\mathcal A}(n,m)\mid \overline\ell(\lambda) \text{ odd}\}. $$
\end{remark}

\begin{remark}
   In Theorem \ref{cor2.2}, when $m=0$,
    $\overline{\mathcal{A}}(n,0)=\{ (\lambda,\emptyset)\vdash n \mid \lambda\in \mathcal{D} \}= \overline{\mathcal{E}}(n,0)$. 
    Then $\varphi_0$ is an involution on the empty set, and $\psi_0$ is conjugation, i.e., $\psi_0(\lambda, \emptyset)=\lambda'$. 
    It is known that the conjugate of a distinct partition is a partition with consecutive parts, i.e., a gap free partition. Thus, conjugation gives a bijective proof of \cite[Corollary 2.3]{missing}, where the number of gap-free partitions of $n$ is denoted by $\mathcal{P}^*(n)$.
\end{remark}
\begin{corollary} \cite[Corollary 2.3]{missing}
The number of partitions of $n$ in which no integer is missing, $P(n,0)= \mathcal{P}^*(n)$, equals the number of distinct partitions of $n$.
We have
\begin{equation*}
\displaystyle\sum^{\infty}_{n=0} \mathcal{P}^*(n) q^n = (-q;q)_{\infty}.
\end{equation*}
\end{corollary}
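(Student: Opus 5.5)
The corollary is the case $m=0$ of Theorem \ref{cor2.2}, so I would prove it in two ways: by specializing the generating function, and by conjugation directly.

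\emph{Generating function.} Set $w=0$ in \eqref{eq:cor2.2}. The left-hand side becomes $\sum_{n\ge 0} P(n,0)q^n$. Since $(wq;q)_\infty$ becomes $1$, the right-hand side becomes $(-q;q)_\infty$. We also have $P(n,0)=\mathcal P^*(n)$ by definition. A partition has no missing integers exactly when every integer from $1$ up to its largest part occurs, which is the gap-free condition. Finally, $(-q;q)_\infty=\prod_{j\ge1}(1+q^j)$ is the generating function for $\mathcal D$. Together these give the identity and the equality of counts.

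\emph{Bijection.} I would check that conjugation maps $\mathcal D$ bijectively onto gap-free partitions; this is the map $\psi_0$ noted in the preceding remark.

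Let $\lambda=(\lambda_1>\lambda_2>\dots>\lambda_\ell)\in\mathcal D$. Conjugation preserves size. The parts of $\lambda'$ are the column lengths of $\lambda$. Column $j$ has length $\#\{i:\lambda_i\ge j\}$.

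As $j$ runs from $1$ to $\lambda_1$, this count is non-increasing. It starts at $\ell$ and ends at $1$. Each step drops by at most $1$, because the parts of $\lambda$ are distinct, so at most one part equals any given $j$. Therefore every value $1,2,\dots,\ell$ occurs as a part of $\lambda'$. The largest part of $\lambda'$ is $\ell$, so $\lambda'$ has no missing integers.

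Conversely, suppose $\mu$ is gap-free with largest part $L$. Then $\mu'$ has parts $\mu'_k=\#\{i:\mu_i\ge k\}$ for $k=1,\dots,L$. Each value $k\le L$ occurs in $\mu$, so $\mu'_k>\mu'_{k+1}$ for $k<L$. Hence $\mu'\in\mathcal D$. Conjugation is an involution, so this is a bijection.

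In the marked-diagram language, this says the conjugate diagram of $\lambda$ carries no $*$, since every row of $\lambda$ is the last row of its length. No step here is a real obstacle; the only point needing care is that distinctness of parts is exactly the condition that consecutive column lengths differ by at most one.
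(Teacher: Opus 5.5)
Your proposal is correct and follows the paper's route. The paper also treats this as the case $m=0$ of Theorem~\ref{cor2.2}, where $\overline{\mathcal A}(n,0)=\overline{\mathcal E}(n,0)=\{(\lambda,\emptyset)\mid \lambda\in\mathcal D\}$ and $\psi_0$ is just conjugation; your column-length check, showing consecutive column lengths of a distinct partition drop by at most one (and the converse), supplies the step the paper only cites as known.
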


\begin{remark}
    In Theorem \ref{cor2.2}, when $m=1$, 
   \begin{align*}\overline{\mathcal{A}}(n,1)& =\{(\lambda,\mu )\vdash n \mid \lambda \in \mathcal{D} , \, \ell(\mu)=1 \} \cup \{ (\lambda, \emptyset) \mid \lambda\in \overline{\mathcal{D}}, \, \overline{\ell}(\lambda)=1\}\\
\overline{\mathcal{E}}(n,1)& =\{(\lambda,\mu)\vdash n  \mid \lambda \in \mathcal D,\, \mu=(i), \,   i\in \lambda \}. \end{align*}

Thus, if $(\lambda, \mu)\in \overline{\mathcal{E}}(n,1)$, then $\lambda\cup \mu$ is a partition with exactly one part repeating twice and all other parts distinct. Hence, $\psi_1^{-1}$, which is conjugation, gives a bijective proof of \cite[Corollary 2.4]{missing}.
\end{remark}

\begin{corollary} \cite[Corollary 2.4]{missing}
The number of partitions of $n$ in which exactly one integer is missing, $P(n,1)$, is equal to the number of partitions of $n$ in which exactly one part repeats twice while all other parts occur onlly once. 
We have 
\begin{equation*}
    \displaystyle\sum^{\infty}_{n=0} P(n,1) q^n = (-q;q)_{\infty} \displaystyle\sum^{\infty}_{k=1} \frac{q^{2k}}{1+q^k}.
\end{equation*}
\end{corollary}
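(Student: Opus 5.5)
The combinatorial statement already follows from the preceding remark: $\psi_1$ restricted to $\overline{\mathcal E}(n,1)$ is conjugation. It carries $\lambda\cup(i)$, where $\lambda\in\mathcal D$ and $i\in\lambda$, bijectively onto $\mathcal P(n,1)$. The map $(\lambda,(i))\mapsto\lambda\cup(i)$ is itself a bijection from $\overline{\mathcal E}(n,1)$ onto the partitions of $n$ in which exactly one part occurs twice and all other parts occur once. Indeed, given such a partition, the repeated part $i$ and the underlying distinct partition $\lambda$ are recovered uniquely. So the only remaining work is the generating function
\[
\sum_{n\ge0}P(n,1)q^n=(-q;q)_\infty\sum_{k\ge1}\frac{q^{2k}}{1+q^k}.
\]
I will prove it by showing that the right hand side counts pairs $(\lambda,k)$ with $\lambda\in\mathcal D$, $k\in\lambda$, weighted by $q^{|\lambda|+k}$.

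First I will fix the doubled part $k$. The partitions with $k$ appearing exactly twice and every other part distinct are generated by $q^{2k}\prod_{j\neq k}(1+q^j)$. Writing $\prod_{j\ne k}(1+q^j)=(-q;q)_\infty/(1+q^k)$ gives the summand $(-q;q)_\infty\,q^{2k}/(1+q^k)$. Summing over $k\ge1$ yields the formula. This step is routine; the only point to check is that no partition is counted for two different values of $k$, which holds because exactly one part is repeated.

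As a consistency check, I will also read the same series off \eqref{eq:cor2.2}. Differentiate $((w-1)q;q)_\infty/(wq;q)_\infty$ with respect to $w$ and set $w=0$. The logarithmic derivative at $w=0$ is $\sum_k\bigl(-q^k/(1+q^k)+q^k\bigr)=\sum_k q^{2k}/(1+q^k)$, and the value of the product at $w=0$ is $(-q;q)_\infty$. This confirms the identity analytically.

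I do not expect a real obstacle. The most delicate step is stating cleanly that the decoration in $\psi_1$ places exactly one $*$ after conjugation. This is guaranteed by the argument in the proof of Theorem~\ref{cor2.2}, since $\ell(\mu)=1$.
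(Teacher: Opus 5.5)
Your proposal is correct and takes essentially the same approach as the paper. The paper also identifies $\overline{\mathcal E}(n,1)$ with pairs $(\lambda,(i))$, $\lambda\in\mathcal D$, $i\in\lambda$, and uses conjugation ($\psi_1^{-1}$) as the bijection. The only additions are the routine derivation of the generating function by fixing the doubled part $k$, which the paper leaves implicit, and your check of the $w^1$ coefficient of \eqref{eq:cor2.2}, which is also correct.
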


\begin{example} Let  $\lambda=(5,4,4,4,2,1,1)\in \mathcal P(21,1)$. The only missing integer in $\lambda$ is $3$.  Then $\psi_1^{-1}(\lambda)=\lambda'=(7,5, 4,4,1)$, the only repeated part in $\lambda'$ is $4$ and $m_{\lambda'}(4)=2$.

$$\young(\hfil\hfil\hfil\hfil\hfil,\hfil\hfil\hfil\hfil,\hfil\hfil\hfil\hfil,\hfil\hfil*\hfil,\hfil\hfil,\hfil,\hfil)\hspace{2cm} \young(\hfil\hfil\hfil\hfil\hfil\hfil\hfil,\hfil\hfil\hfil\hfil\hfil,\hfil\hfil\hfil\bullet,\hfil\hfil\hfil\hfil,\hfil) $$
$$\lambda\hspace{4.2cm} \lambda'\ \ \ \ \ $$
\end{example}

\begin{remark}
    The involution $\varphi_m$ on $\overline{\mathcal A}(n,m)\setminus \overline{\mathcal E}(n,m)$ defined in the proof of Theorem \ref{cor2.2} applied to $(\lambda, \mu)$ also changes the parity of $\ell (\mu)$.
\end{remark}

\begin{definition} Similar to the notation in  \cite{missing}, we define the sets \begin{align*}\mathcal M_e(n)& :=\{\lambda\vdash n \mid \m(\lambda) \text{ even}\}=\bigcup_{k\geq 0}\mathcal P(n,2k)\\ \mathcal M_o(n)& :=\{\lambda\vdash n \mid \m(\lambda) \text{ odd}\}=\bigcup_{k\geq 1}\mathcal P(n,2k-1),\end{align*} and set $M_e(n):=|\mathcal M_{e}(n)|$ and $M_o(n):=|\mathcal M_{o}(n)|$.
    The functions ${M}_e(n)$ and ${M}_o(n)$ denote the number of partitions of $n$ with an even/odd number of missing integers, respectively. 
\end{definition}

The combinatorial proof of Theorem \ref{cor2.2} can be adapted to provide a combinatorial proof of \cite[Corollary 2.5]{missing}.

\begin{corollary}\label{cor2.5}\cite[Corollary 2.5]{missing}
    We see that \begin{equation}\label{eq:cor2.5} \sum_{n=0}^\infty\sum_{m=0}^\infty P(n,m)(-1)^mq^n= \sum_{n=0}^\infty \left( M_e(n)- M_o(n)\right) q^n=\frac{(-2q;q)_\infty}{(-q;q)_\infty}. \end{equation}
\end{corollary}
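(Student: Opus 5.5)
Setting $w=-1$ in Theorem \ref{cor2.2} gives the identity at once, since $((w-1)q;q)_\infty/(wq;q)_\infty$ becomes $(-2q;q)_\infty/(-q;q)_\infty$. What we want, however, is a combinatorial proof, so the plan is to specialize the objects of the proof of Theorem \ref{cor2.2} rather than the generating function. The middle expression in \eqref{eq:cor2.5} is just $\sum_m (-1)^m P(n,m)$ regrouped by parity, so only the right-hand equality needs an argument.

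The coefficient of $q^n$ in $(-2q;q)_\infty/(-q;q)_\infty=((w-1)q;q)_\infty/(wq;q)_\infty|_{w=-1}$ is
$$\sum_{m}(-1)^m\Big(\#\{(\lambda,\mu)\in\overline{\mathcal A}(n,m)\mid \overline\ell(\lambda)\text{ even}\}-\#\{(\lambda,\mu)\in\overline{\mathcal A}(n,m)\mid \overline\ell(\lambda)\text{ odd}\}\Big).$$
For $(\lambda,\mu)\in\overline{\mathcal A}(n,m)$ we have $m=\overline\ell(\lambda)+\ell(\mu)$. Hence each pair $(\lambda,\mu)\in\overline{\mathcal A}(n)$ is counted exactly once, with total sign $(-1)^{\overline\ell(\lambda)+\ell(\mu)}(-1)^{\overline\ell(\lambda)}=(-1)^{\ell(\mu)}$. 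So the coefficient equals the signed count $\sum_{(\lambda,\mu)\in\overline{\mathcal A}(n)}(-1)^{\ell(\mu)}$. As a check, this matches the direct reading of the product: $(-2q;q)_\infty$ counts distinct overpartitions with no sign, and $1/(-q;q)_\infty$ counts partitions $\mu$ with sign $(-1)^{\ell(\mu)}$.

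Next, apply the involutions $\varphi_m$ on each $\overline{\mathcal A}(n,m)\setminus\overline{\mathcal E}(n,m)$. They preserve $m$, and by the preceding remark they change the parity of $\ell(\mu)$, since they move exactly one part between $\lambda$ (as an overlined part) and $\mu$. Taken together they therefore form a sign-reversing involution on $\overline{\mathcal A}(n)\setminus\bigcup_m\overline{\mathcal E}(n,m)$. The signed count then reduces to $\sum_m\sum_{(\lambda,\mu)\in\overline{\mathcal E}(n,m)}(-1)^{\ell(\mu)}=\sum_m(-1)^m|\overline{\mathcal E}(n,m)|$, using $\ell(\mu)=m$ on $\overline{\mathcal E}(n,m)$.

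Finally, the bijections $\psi_m:\overline{\mathcal E}(n,m)\to\mathcal P(n,m)$ from Theorem \ref{cor2.2} turn this into $\sum_m(-1)^mP(n,m)=M_e(n)-M_o(n)$. The main point to verify carefully is the sign bookkeeping: the signs from $w^m$ at $w=-1$ and from the overline parity must combine to $(-1)^{\ell(\mu)}$, and the involution must reverse this new sign. The remark about $\ell(\mu)$ handles the second part. Everything else is inherited directly from the proof of Theorem \ref{cor2.2}.
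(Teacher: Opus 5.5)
Your proposal is correct and follows the paper's proof. The paper reads $(-2q;q)_\infty/(-q;q)_\infty$ directly as the signed count $|\overline{\mathcal A}_e(n)|-|\overline{\mathcal A}_o(n)|$, i.e.\ with sign $(-1)^{\ell(\mu)}$. It then glues the $\varphi_m$ into one involution $\phi$ on $\overline{\mathcal A}(n)\setminus\overline{\mathcal E}(n)$ that flips the parity of $\ell(\mu)$, and finishes with the bijections $\psi_m$. Your additional derivation of the sign $(-1)^{\ell(\mu)}$ from the $w=-1$ specialization of the signed count in Theorem \ref{cor2.2} is a harmless consistency check that the paper omits.
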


\begin{proof}

    As noted in \cite{missing}, the infinite product $(-2q;q)_\infty$ is the generating function for the number of distinct overpartitions of $n$.  Let \begin{align*}
        \overline{\mathcal A}_e(n)& :=\{(\lambda, \mu)\in \overline{\mathcal A}(n)\mid \ell(\mu) \text{ even}\}\\ \overline{\mathcal A}_o(n)& :=\{(\lambda, \mu)\in \overline{\mathcal A}(n)\mid \ell(\mu) \text{ odd}\}
    \end{align*}
Then, $\displaystyle\frac{(-2q;q)_\infty}{(-q;q)_\infty}$ is the generating function for the sequence $\{|\overline{\mathcal A}_e(n)|-|\overline{\mathcal A}_o(n)|\}$. We have $\overline{\mathcal A}(n)=\ds\bigcup_{m\geq 0}\overline{\mathcal A}(n.m)$ and we define 
 $$\overline{\mathcal E}(n):=\bigcup_{m\geq 0}\overline{\mathcal E}(n,m)=\{(\lambda, \mu \in \overline{\mathcal A}(n) \mid \overline \ell(\lambda)=0 \text{ and if }i\in \mu \text{ then }i \in \lambda\}.$$ From the definitions, it follows that  $\overline{\mathcal A}(n)\setminus \overline{\mathcal E}(n)=\ds \bigcup_{m\geq 0}\overline{\mathcal A}(n,m)\setminus \overline{\mathcal E}(n,m)$. We define an involution $\phi$ on $\overline{\mathcal A}(n)\setminus \overline{\mathcal E}(n)$ as follows: if $(\lambda,\mu)\in \overline{\mathcal A}(n)\setminus \overline{\mathcal E}(n)$ and $\ell(\mu)+\overline \ell(\lambda)=m$, then $(\lambda, \mu)\in \overline{\mathcal A}(n,m)\setminus \overline{\mathcal E}(n,m)$ and we define $\phi(\lambda,\mu):=\varphi_m(\lambda,\mu)$. Since $\varphi_m$ changes the parity of $\ell(\mu)$, so does $\phi$. Thus, if \begin{align*}\overline{\mathcal E}_e(n)& :=\bigcup_{k\geq 0}\overline{\mathcal E}(n,2k)\\ \overline{\mathcal E}_o(n)& :=\bigcup_{k\geq 1}\overline{\mathcal E}(n,2k-1),  \end{align*} then, $|\overline{\mathcal A}_e(n)|-|\overline{\mathcal A}_o(n)|= |\overline{\mathcal E}_e(n)|-|\overline{\mathcal E}_o(n)|$. The proof of Theorem \ref{cor2.2}, shows combinatorially that $|\overline{\mathcal E}_e(n)|=M_e(n)$ and $|\overline{\mathcal E}_o(n)|=M_o(n)$.
\end{proof}

\begin{theorem}\label{thm2.9}\cite[Theorem 2.9]{missing}
    Let $k, n$ be a positive integers. The number of distinct integers less than or equal to the largest part appearing less than $k$ times in all the partitions of $n$ equals the number of parts different from  $k$ in all the partitions of $n$. 
\end{theorem}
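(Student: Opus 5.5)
We will prove the identity by double counting marked partitions, and then make it bijective by matching a large set and a subset on each side. Write the left-hand side as $|\mathcal L(n)|$, where
$$\mathcal L(n):=\{(\lambda,j)\mid \lambda\vdash n,\ 1\le j\le \lambda_1,\ m_\lambda(j)<k\}.$$
Write the right-hand side as $|\mathcal R(n)|$, where $\mathcal R(n)$ is the set of pairs $(\mu,c)$ with $\mu\vdash n$ and $c$ a marked occurrence of a part of $\mu$ of size $\ne k$. Parts are counted with multiplicity; a check with $n=2$, $k=1$ confirms this is the intended reading. The plan is to exhibit both sets as complements inside larger sets that are in bijection with each other.

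\textbf{Step 1 (the large sets).} Let $\mathcal L^+(n):=\{(\lambda,j)\mid \lambda\vdash n,\ 1\le j\le \lambda_1\}$, and let $\mathcal R^+(n)$ be the set of all partitions of $n$ with one marked part occurrence. Conjugation gives a bijection $\mathcal L^+(n)\to\mathcal R^+(n)$. Send $(\lambda,j)$ to $\lambda'$ with its $j$-th row marked; this row exists because $\ell(\lambda')=\lambda_1\ge j$.

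\textbf{Step 2 (the removed subsets).} We have $\mathcal L(n)=\mathcal L^+(n)\setminus \mathcal S(n)$, where $\mathcal S(n)=\{(\lambda,j)\mid m_\lambda(j)\ge k\}$; the condition $j\le\lambda_1$ is automatic here. Likewise $\mathcal R(n)=\mathcal R^+(n)\setminus\mathcal T(n)$, where $\mathcal T(n)$ consists of partitions with a marked part equal to $k$. Define $\sigma:\mathcal S(n)\to\mathcal T(n)$ by
$$\sigma(\lambda,j):=\bigl(\lambda\setminus(j^k)\cup(k^j),\ \text{the $j$-th copy of $k$ marked}\bigr).$$
Its inverse is as follows. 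Given $\nu$ with its $i$-th copy of $k$ marked (so $m_\nu(k)\ge i$), remove $i$ copies of $k$, add $k$ copies of $i$, and set $j=i$. Both maps pass through the same intermediate partition $\lambda\setminus(j^k)=\nu\setminus(k^j)\vdash n-jk$, so they are mutually inverse. This matches the counts $\sum_j p(n-jk)$ on both sides.

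\textbf{Step 3 (complements).} Steps 1 and 2 immediately give $|\mathcal L(n)|=|\mathcal L^+(n)|-|\mathcal S(n)|=|\mathcal R^+(n)|-|\mathcal T(n)|=|\mathcal R(n)|$, which proves the theorem combinatorially. To obtain an honest bijection $\mathcal L(n)\to\mathcal R(n)$, we will use the standard Garsia--Milne/"ping-pong" construction. Start from $x\in\mathcal L(n)$ and apply the conjugation map $c$ of Step 1. If $c(x)\in\mathcal R(n)$, stop. Otherwise $c(x)\in\mathcal T(n)$; apply $\sigma^{-1}$ to land in $\mathcal S(n)$, apply $c$ again, and repeat until the image lies in $\mathcal R(n)$. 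Finiteness and bijectivity follow from the finiteness of the sets and the injectivity of $c$ and $\sigma$.

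The main obstacle is this last step. The ping-pong bijection is valid but not very transparent. The first thing to try is whether the iteration ever needs more than one round. Concretely, this means checking whether $c\circ\sigma^{-1}$ applied to $c(x)\in\mathcal T(n)$ can return to $\mathcal T(n)$. If it can, we will settle for the iterative description rather than seek a closed-form map.
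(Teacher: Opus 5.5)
Your proof is correct, and it takes a genuinely different route from the paper. You argue by complements. Conjugation matches all pairs $(\lambda,j)$ with $j\le\lambda_1$ with all marked parts, so $\sum_{\lambda\vdash n}\lambda_1=\sum_{\lambda\vdash n}\ell(\lambda)$. Your $\sigma$ is a bijective proof of Elder's theorem, with both removed sets of size $\sum_{j\ge 1}p(n-jk)$. This turns the paper's remark that Theorem~\ref{thm2.9} is a complement to Elder's theorem into a three-line proof, and it gives the common value $\sum_{\lambda\vdash n}\lambda_1-\sum_{j\ge 1}p(n-jk)$ for free.

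The paper never leaves $\mathcal L(n)$ and $\mathcal R(n)$. It decorates the right border $B_\lambda$ and matches the two sides piece by piece:
\begin{itemize}
\item Conjugation is applied only to the $\bullet\leftrightarrow *$ pieces: non-final rows of a size $\neq k$ versus missing integers whose column ends in a row $\neq k$.
\item The $s$-marks (parts $\neq k$ of multiplicity between $1$ and $k-1$) appear on both sides and are left alone.
\item Adaptations of Stanley's remove-and-insert bijection handle the rest. They send the missing integers whose column has length exactly $k$ (marks $\mathbf{x},\mathbf{y}$) and the part $k$ when $1\le m_\lambda(k)<k$ (mark $\mathbf{k_s}$) to last copies of parts $\neq k$ of multiplicity at least $k$ (marks $\ell,\mathbf{1_\ell}$).
\end{itemize}
With its eight cases, the paper gets a direct, non-iterative bijection $\mathcal L(n)\to\mathcal R(n)$, which is what \cite{missing} asked for. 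You get brevity and transparency, but your bijection exists only through the involution principle.

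On your ``main obstacle'': one round does not suffice in general. Take $k=1$, $n=3$ and the pair $((3),2)$.
\begin{itemize}
\item Conjugation gives $(1,1,1)$ with its second row marked, and $\sigma^{-1}$ gives $((2,1),2)$.
\item Conjugation gives $(2,1)$ with its second row (a part $1$) marked, and $\sigma^{-1}$ gives $((2,1),1)$.
\item Only the third conjugation lands in $\mathcal R(3)$, at $(2,1)$ with the part $2$ marked.
\end{itemize}
So the iterative description cannot be collapsed to a single bounce; the paper's decomposition is one way to get a closed form.

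Also a small slip: $n=2$, $k=1$ does not distinguish the two readings of the right-hand side, since both give $1$. The case $n=2$, $k=2$ does: the left side is $2$, counting parts with multiplicity gives $2$, and counting distinct parts gives $1$.
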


\begin{remark} Note that Theorem \ref{thm2.9} is the a complement to Elder's theorem which states that the number of different parts with multiplicity at least $k$ in all the partitions of $n$ equals the number of parts equal to $k$ in all the partitions of $n$. 
In \cite[Exercise 80]{stanley_ec1}, Stanley  gave a bijective  proof of Elder's theorem. Moreover, in the solution to Exercise 80, Stanley also gives a brief insight into the interesting history of the theorem. 
We  give a direct combinatorial proof of Theorem \ref{thm2.9}. 
\end{remark}

\begin{proof}[Proof of Theorem \ref{thm2.9}] 

Let $k, n \geq 1$.  
The integers less than or equal to the largest part $\lambda_1$ of a partition $\lambda$ appearing less than $k$ times  in  $\lambda$ are integers $j\leq \lambda_1$ such that $0\leq m_\lambda(j)<k$, i.e., the missing integers in $\lambda$ and well as the different parts of $\lambda$ with multiplcity less than $k$. Given a partition $\lambda$, we define the \textit{right border}, $B_\lambda$, of $\lambda$ to be the  subdiagram of the Ferrers diagram of $\lambda$ consisting of all boxes with no box to its right, or no box below, or both (no box to its right and below). We write $(i,j)\in B_\lambda$  if $(i,j)\in \lambda$ and $(i, j+1)\not \in \lambda$ or $(i+1, j)\}\not\in \lambda$.  We  note that the right border of a partition is different from the classical definition of a border strip in a Ferrers diagram in which a configuration \ $\scalebox{.4}{\ydiagram{2, 1}}$ \ is allowed as a subdiagram.   Given a partition $\lambda$ of $n$ we decorate its Ferrers diagram by placing symbols in some of the boxes in $B_\lambda$ as follows. 

 $\bullet$ in $(i,j)\in B_\lambda$ if $j\neq k$ and $(i+1, j)\in B_\lambda$; 

 $*$ in $(i,j)\in B_\lambda$ if $i\neq k$ and $(i, j+1)\in B_\lambda$;

 $\a$ in $(k,j)$ if $j\neq \lambda_k-k+1$ and $(k, j+1)\in B_\lambda$; 

 $\b$ in $(k,\lambda_k-k+1)$ if $(k,\lambda_k-k+2)\in B_\lambda$; 

 $s$ in $(i,j)$ if $j\neq k$ and $(i+1, j), (i, j+1), (i-k+1, j)\not\in 
 B_\lambda$;  

 $\ks$ in $(i,k)$ if and $(i+1, k), (i, k+1), (i-k+1, k)\not\in 
 B_\lambda$;

 $\ell$ in $(i,j)$ if $j\neq 1,k$ and $(i+1, j), (i, j+1)\not\in 
 B_\lambda$ and $(i-k+1, j)\in B_\lambda$; 

 $\1$ in $(i,1)$ if $(i+1, 1), (i, 2)\not \in B_\lambda$ and $(i-k+1, 1)\in B_\lambda$.

\vspace{.1in}

\noindent  To summarize, in $B_\lambda$, we have 
 
 $\bullet$ in columns but not in corners and not in the $k$-th column;

 $*$ in rows but not in corners and not in the $k$-th row;

 $\a$ in the $k$-th row but not in the corner or the $k$-th box from the right; 

 $\b$ (if $k>1$) in the $k$-th row and the  $k$-th box from the right; 

 $s$, respectively $\ks$, in corners at the end of ``short" columns of length at most $k-1$;

 $\ell$, respectively $\1$, in corners at the end of ``long" columns of length at least $k$.

 We illustrate a decorated Ferrers diagram in  Example \ref{ex_dec}.

Note that  there is no symbol in the $k$-th column of $B_\lambda$ except possibly $\ks$ in the corner box. If $1\in \lambda$, the last box of the first column in $B_\lambda$ has $s$ or $\1$. If $k=1$, there is no $\b$, $\ks$ or $\1$ in a box in $B_\lambda$.

The number of distinct integers less than or equal to the largest part appearing less than $k$ times in all the partitions of $n$ equals the total number of $*$, $\a$, $\b$, $s$ and $\ks$ in all the partitions of $n$. 
Note that, of these, the total number of $*$, $\a$, $\b$ in all partitions of $n$ equals the number of missing integers in all partitions of $n$. 
The total number of parts different from $k$ in all the partitions of $n$ equals the total number of $\bullet$, $\ell$, $s$ and $\1$ in all the partitions of $n$. 

Conjugations maps $\bullet$ in $\lambda$ to $*$ in $\lambda'$ and vice-versa. 
It remains to show that the total number of  $\a$, $\b$ and $\ks$ in all the partitions of $n$ equals the total number of  $\ell$ and $\1$ in all the partitions of $n$. 
We adapt Stanley's bijection for Elder's theorem \cite{stanley_ec1}. If necessary for transformations to make sense, we add  parts equal to $0$ to a partition. 

We first show that the total number of  $\a$ in all the partitions of $n$ equals the total number of  $\ell$ in all the partitions of $n$. 
If there is a $\a$ in position $(k, j)$,  $j\neq \lambda_k-k+1$, create a partition $\eta$ from $\lambda$ by subtracting $\lambda_k-j+1$ from each of the first $k$ parts of $\lambda$ and inserting $k$ parts equal to $\lambda_k-j+1$ into the obtained partition. 
Then, in the Ferrers diagram of $\eta$, there is an $\ell$ in the last box of the last row of length $\lambda_k-j+1$. For an illustration, see Example \ref{ex_x}.
Conversely, if a partition $\eta$ has $\ell$ in the last box of the last row equal to $i$, create a partition $\lambda$ from $\eta$ by removing $k$ parts equal to $i$ and adding $i$ to each of the first $k$ parts of the obtained partition. 
Then, in $\lambda$, the box in position $(k, \lambda_k-i+1)$ is marked $\a$. 

Next, we show that the total number of $\ks$ and $\b$   in all the partitions of $n$ equals the total number of   $\1$ in all the partitions of $n$.

Suppose  $\lambda$ is partition with $1\leq m_\lambda(k)<k$, i.e., $\lambda$  has $\ks$ in the last row of length $k$. (This can only occur if $k>1$.)
We create a partition $\zeta$ from $\lambda$ by removing a part of length $k$ and inserting $k$ parts equal to $1$. 
Then, the box in the last row equal to $1$ in $\zeta$ has $\1$. 
Moreover, $m_\zeta(k)\leq k-2$. Hence, $\ks$ in $\lambda$ corresponds to $\1$ in $\zeta$.

Now suppose $\lambda$ is a partition  with $\b$ in the box in position $(k, \lambda_k-k+1)$ (This can only occur if $k>1$.) We create a partition $\nu$ from $\lambda$ by subtracting $k$ from each of the first $k$ parts of $\lambda$
and inserting $k-1$ parts equal to $k$ and $k$ parts equal to $1$. In the partition $\nu$ there is $\1$ in the last row of size $1$. Moreover, $m_\nu(k)\geq k-1$. Hence, $\b$ in $\lambda$ corresponds to $\1$ in $\nu$.

These transformations are reversible. 
 \end{proof}


\begin{example} \label{ex_dec}
    If $k=5$ and $\lambda=(20^2, 18^3, 10^7, 7^7, 5^4, 2^2, 1^7)$, the Ferrers diagram of $\lambda$  is decorated as shown below 

$$\young(\hfil\hfil\hfil\hfil\hfil\hfil\hfil\hfil\hfil\hfil\hfil\hfil\hfil\hfil\hfil\hfil\hfil\hfil\hfil\bullet,\hfil\hfil\hfil\hfil\hfil\hfil\hfil\hfil\hfil\hfil\hfil\hfil\hfil\hfil\hfil\hfil\hfil\hfil*s,\hfil\hfil\hfil\hfil\hfil\hfil\hfil\hfil\hfil\hfil\hfil\hfil\hfil\hfil\hfil\hfil\hfil\bullet,\hfil\hfil\hfil\hfil\hfil\hfil\hfil\hfil\hfil\hfil\hfil\hfil\hfil\hfil\hfil\hfil\hfil\bullet,\hfil\hfil\hfil\hfil\hfil\hfil\hfil\hfil\hfil\hfil\a\a\a\b\a\a\a s,\hfil\hfil\hfil\hfil\hfil\hfil\hfil\hfil\hfil\bullet,\hfil\hfil\hfil\hfil\hfil\hfil\hfil\hfil\hfil\bullet,\hfil\hfil\hfil\hfil\hfil\hfil\hfil\hfil\hfil\bullet,\hfil\hfil\hfil\hfil\hfil\hfil\hfil\hfil\hfil\bullet,\hfil\hfil\hfil\hfil\hfil\hfil\hfil\hfil\hfil\bullet,\hfil\hfil\hfil\hfil\hfil\hfil\hfil\hfil\hfil\bullet,\hfil\hfil\hfil\hfil\hfil\hfil\hfil**\ell,\hfil\hfil\hfil\hfil\hfil\hfil\bullet,\hfil\hfil\hfil\hfil\hfil\hfil\bullet,\hfil\hfil\hfil\hfil\hfil\hfil\bullet,\hfil\hfil\hfil\hfil\hfil\hfil\bullet,\hfil\hfil\hfil\hfil\hfil\hfil\bullet,\hfil\hfil\hfil\hfil\hfil\hfil\bullet,\hfil\hfil\hfil\hfil\hfil*\ell,\hfil\hfil\hfil\hfil\hfil,\hfil\hfil\hfil\hfil\hfil,\hfil\hfil\hfil\hfil\hfil,\hfil\hfil**\ks,\hfil\bullet,\hfill s,\bullet,\bullet,\bullet,\bullet,\bullet,\bullet,\1)$$
\end{example}

\begin{example}\label{ex_x} In this example we show only the relevant decoration. Let $k=3$ and $\lambda=(7, 6, 6, 2, 2, 1)$ with box in position $(k,j)=(3,5)$ marked with $\a$. Then, $\lambda_3-j+1=6-5+1=2$. The corresponding partition $\eta$ is obtained from $\lambda$ by subtracting $2$ from each of the first three rows and inserting three rows of size $2$. In $\eta$ the last row of size $2$ is marked with $\ell$ in the last box.

$$\young(\hfil\hfil\hfil\hfil\hfil\hfil\hfil,\hfil\hfil\hfil\hfil\hfil\hfil,\hfil\hfil\hfil\hfil\a\hfil,\hfil\hfil,\hfil\hfil,\hfil)\hspace{2cm} \young(\hfil\hfil\hfil\hfil\hfil,\hfil\hfil\hfil\hfil,\hfil\hfil\hfil\hfil,\hfil\hfil,\hfil\hfil,\hfil\hfil,\hfil\hfil,\hfil\ell,\hfil)$$ $$\lambda \hspace{4.2cm}\eta$$
    
\end{example}

 As noted above, if $k=1$, none of  $s$, $\b$, $\ks$ and $\1$ occur  in the decorated Ferrers diagram of a partition. In this case we obtain a simpler combinatorial proof. 

\begin{corollary}\label{cor2.10}\cite[Corollary 2.10]{missing} 
    The number of missing integers in all the partitions of $n$ equals the number of parts different from $1$ in all the partitions of $n$.
\end{corollary}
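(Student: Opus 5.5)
The plan is to specialize the proof of Theorem \ref{thm2.9} to $k=1$ and check that the decoration collapses to a single conjugation. When $k=1$, an integer $j\le\lambda_1$ appears fewer than $1$ time in $\lambda$ exactly when $m_\lambda(j)=0$. Such a $j$ is automatically less than $\lambda_1$, so it is precisely a missing integer of $\lambda$. Hence the left-hand quantity of Theorem \ref{thm2.9} at $k=1$ is the total number of missing integers, and the right-hand quantity is the total number of parts different from $1$. So the corollary is the case $k=1$ of Theorem \ref{thm2.9}. I will still give the simpler direct argument, extracted from that proof.

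First I record the decoration of $B_\lambda$ when $k=1$, as noted after the proof of Theorem \ref{thm2.9}.
\begin{itemize}
\item The symbols $s$, $\b$, $\ks$ and $\1$ do not occur. A column of length at most $k-1=0$ cannot exist, and the boxes $\b$, $\ks$, $\1$ require $k>1$.
\item The missing integers of $\lambda$ are counted by the $*$'s, which lie in rows other than the first, together with the $\a$'s, which lie in row $k=1$. Together these are exactly the marks placed in the marked Ferrers diagram at the start of Section \ref{proofs}: one mark in the bottom cell of each column that is not the rightmost column of its length.
\item Parts different from $1$ are counted by the $\bullet$'s, which lie in columns other than column $1$, together with the $\ell$'s, which are corners at the ends of columns of length at least $1$, excluding column $1$.
\end{itemize}

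The cleanest route avoids the $\a\leftrightarrow\ell$ bijection altogether. The number of missing integers of $\lambda$ is the number of columns $j<\lambda_1$ with $\lambda'_j=\lambda'_{j+1}$, since $j$ is missing iff no row ends at column $j$. By conjugation, this equals the number of indices $i<\ell(\lambda')$ with $\lambda'_i=\lambda'_{i+1}$, counted over $\mu=\lambda'$. That count is $\sum_{j}(m_\mu(j)-1)_{+}$, which equals $\ell(\mu)$ minus the number of distinct parts of $\mu$.

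It therefore suffices to show the following: over all partitions of $n$, the total number of parts minus the total number of distinct parts equals the total number of parts minus the total number of $1$'s. Equivalently, the total number of distinct parts equals the total number of parts equal to $1$. This is Elder's theorem with $k=1$, or equivalently the $\a\leftrightarrow\ell$ bijection from the proof of Theorem \ref{thm2.9} at $k=1$. Concretely, that bijection reads as follows.
\begin{itemize}
\item Given a distinct part $d$ of $\lambda$, remove one copy of $d$ and add $d$ ones. Distinguish the last of the ones this creates.
\item Conversely, given a marked $1$, the marks determine how many ones to merge back into a single part.
\end{itemize}
Stanley's standard version marks the pair (partition of $n-d$, part $d$) on one side and (partition of $n-d$, a $1$ added $d$ times) on the other, so I would phrase it as the matching of $\#\{\lambda\vdash n: d\in\lambda\}$ with $\#\{\lambda\vdash n: m_\lambda(1)\ge d\}$. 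Both counts equal $p(n-d)$. Summing over $d$ gives the claim.

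The only step needing care is aligning the counts correctly. With the conjugation route, there are exactly $m_\mu(j)-1$ equalities per part size $j$, and the bookkeeping reduces the problem to the $k=1$ case of Elder's theorem via $p(n-d)$. I expect no real obstacle beyond stating this bijection cleanly.
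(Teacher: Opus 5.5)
Your proof is correct, but it is organized differently from the paper's. The paper keeps the two-piece decomposition of Theorem \ref{thm2.9}. Conjugation matches the $*$'s with the $\bullet$'s. The $\a$'s (missing $j$ with $\lambda_2<j<\lambda_1$) are matched with the $\ell$-marks (distinct part sizes $\ge 2$) by a map on $\lambda$ itself, which subtracts $d=\lambda_1-j+1$ from $\lambda_1$ and inserts a new part $d$.

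You instead conjugate all missing integers at once, so their total becomes $\sum_{\mu\vdash n}\bigl(\ell(\mu)-\#\{\text{distinct parts of }\mu\}\bigr)$. You then cancel $\sum_\mu\ell(\mu)$ against the count of parts different from $1$, and are left with the $k=1$ case of Elder's theorem (Stanley's theorem), which you settle by the $p(n-d)$ count.

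Your route is shorter and shows that the corollary is, up to conjugation, just Stanley's theorem. The paper's route gives one explicit bijection between pairs (partition, missing integer) and pairs (partition, part $\neq 1$), and it specializes a construction that works for every $k$. Your cancellation of $\sum_\mu\ell(\mu)$ is not bijective by itself, but it can be made so:
\begin{itemize}
\item Stanley's correspondence $(\nu,d)\mapsto(\nu\setminus(1^d))\cup(d)$ leaves $\nu$ unchanged when $d=1$.
\item The pairs $(\nu,d)$ with $2\le d\le m_\nu(1)$ are in natural bijection with the $\a$'s of $\nu'$: the $\a$ in position $(1,j)$ corresponds to $d=\nu'_1-j+1$.
\item Restricting to $d\ge 2$ therefore gives an $\a\leftrightarrow\ell$ bijection of the same shape as the paper's, though a different map.
\end{itemize}

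There are two expository slips. First, the map you introduce with ``that bijection reads as follows'' is Stanley's, not the paper's $\a\leftrightarrow\ell$ map; the paper's map peels $d$ off the largest part rather than trading a part $d$ for $d$ ones. Second, ``distinguish the last of the ones this creates'' is ambiguous because the $1$'s are indistinguishable; you must mark the $d$-th $1$ of the block so that $d$ can be recovered. Your formulation $\#\{\lambda\vdash n: d\in\lambda\}=p(n-d)=\#\{\lambda\vdash n: m_\lambda(1)\ge d\}$ avoids this and is fully rigorous.
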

\begin{proof}
    If $n=1$, the statement is vacuously true. 
Let $n\geq 2$. 
Given a partition $\lambda$ of $n$, we decorate its Ferrers diagram as in the proof of Theorem \ref{thm2.9}.  

For example, the Ferrers diagram of the partition $\lambda=(13, 9, 9, 5, 5, 5, 3, 1, 1)$ is decorated as shown below 

$$\young(\hfil\hfil\hfil\hfil\hfil\hfil\hfil\hfil\hfil\a\a\a\ell,\hfil\hfil\hfil\hfil\hfil\hfil\hfil\hfil\bullet,\hfil\hfil\hfil\hfil\hfil***\ell,\hfil\hfil\hfil\hfil\bullet,\hfil\hfil\hfil\hfil\bullet,\hfil\hfil\hfil*\ell,\hfil*\ell,\hfil,\hfil)$$

We need to show that the total number of $\bullet$ and $\ell$ in all the partitions of $n$ equals the total number of $*$ and $\a$ in all the  of partitions of $n$. 
Conjugation changes $*$ in $\lambda$ to $\bullet$ in $\lambda'$ and vice-versa. 
Hence, the total number of $\bullet$  in all the partitions of $n$ equals the total number of $*$  in all the  of partitions of $n$. 

To show that the total number of  $\ell$ in all the partitions of $n$ equals the total number of  $\a$ in all the  of partitions of $n$ we proceed as in the proof of Theorem \ref{thm2.9}. 
An $\a$ in position $(1,j)$ in $\lambda$, corresponds to $\ell$ at the end of the last row equal to $\lambda_1-j+1$ in  $\eta$, where $\eta$ is obtained from $\lambda$ by
subtracting $\lambda_1-j+1$ from $\lambda_1$ and inserting a part equal to $\lambda_1-j+1$. 
This transformation is reversible and completes the combinatorial proof of the case $k=1$. 
\end{proof}

Next, we consider results in \cite{missing} concerning overpartitions. Let $\overline{\mathcal P}(n,m)$ be the set of overpartitions of $n$ with exactly $m$ integers less than the largest part missing and define  $\overline{P}(n,m):=|\overline{\mathcal P}(n,m)|$.

\begin{theorem}\cite[Corollary 3.2]{missing} \label{cor3.2}
We have  \begin{equation}\label{eq:cor3.2} \sum_{n=0}^\infty\sum_{m=0}^\infty \overline{P}(n,m)w^mq^n= \frac{((w-2)q;q)_\infty}{(wq;q)_\infty}. \end{equation}
\end{theorem}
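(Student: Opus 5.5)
The plan is to repeat the sign-reversing involution argument of Theorem \ref{cor2.2}, now using doubly overlined distinct overpartitions. First we interpret the right hand side of \eqref{eq:cor3.2}. Write
$$((w-2)q;q)_\infty=\prod_{j\geq 1}\bigl(1+q^j+q^j-wq^j\bigr).$$
Each factor describes the possible status of a part of size $j$ in some $\lambda\in\overline{\overline{\mathcal D}}$: it is absent, or present as $j$, or as $\overline j$, or as $\overline{\overline j}$. The last option carries weight $-w$. The factor $1/(wq;q)_\infty$ generates partitions $\mu\in\mathcal P$ with weight $w^{\ell(\mu)}$. So we set
$$\overline{\overline{\mathcal A}}(n):=\{(\lambda,\mu)\vdash n\mid \lambda\in\overline{\overline{\mathcal D}},\ \mu\in\mathcal P\},\qquad \overline{\overline{\mathcal A}}(n,m):=\{(\lambda,\mu)\in\overline{\overline{\mathcal A}}(n)\mid \overline{\overline\ell}(\lambda)=m-\ell(\mu)\}.$$
Then the coefficient of $w^mq^n$ on the right of \eqref{eq:cor3.2} is the number of elements of $\overline{\overline{\mathcal A}}(n,m)$ with $\overline{\overline\ell}(\lambda)$ even minus the number with $\overline{\overline\ell}(\lambda)$ odd.

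Next we take as fixed-point set
$$\overline{\overline{\mathcal E}}(n,m):=\{(\lambda,\mu)\in\overline{\overline{\mathcal A}}(n,m)\mid \overline{\overline\ell}(\lambda)=0,\ \text{and if } i\in\mu \text{ then } \mathbf i\in\lambda\}.$$
Here a part of $\mu$ may be matched by either $i$ or $\overline i$ in $\lambda$. On the complement we define $\varphi_m$ exactly as in the proof of Theorem \ref{cor2.2}, with doubly overlined parts playing the role that overlined parts played there.

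\begin{itemize}
\item If $\overline{\overline\ell}(\lambda)=0$, let $t$ be the largest part of $\mu$ with $\mathbf t\notin\lambda$, and move it to $\lambda$ as $\overline{\overline t}$.
\item Otherwise, let $\overline{\overline u}$ be the largest doubly overlined part of $\lambda$. If some $i\in\mu$ satisfies $i>u$ and $\mathbf i\notin\lambda$, move the largest such $i$ into $\lambda$ as $\overline{\overline i}$.
\item Otherwise, remove $\overline{\overline u}$ from $\lambda$ and insert $u$ into $\mu$.
\end{itemize}

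The verification that this is an involution is the same as before. The singly overlined and plain parts of $\lambda$ are never touched; they serve only as the obstruction $\mathbf i\in\lambda$. Each application changes $\overline{\overline\ell}(\lambda)$ by one and keeps $\overline{\overline\ell}(\lambda)+\ell(\mu)=m$. Hence the coefficient of $w^mq^n$ equals $|\overline{\overline{\mathcal E}}(n,m)|$, which is the count of pairs $(\lambda,\mu)$ with $\lambda\in\overline{\mathcal D}$, $\ell(\mu)=m$, and every part of $\mu$ occurring in $\lambda$.

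Finally we build a bijection $\overline\psi_m:\overline{\overline{\mathcal E}}(n,m)\to\overline{\mathcal P}(n,m)$ that extends $\psi_m$.

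\begin{itemize}
\item Form the overpartition $\lambda\cup\mu$, in which a part $j$ is overlined exactly when $\overline j\in\lambda$.
\item In its Ferrers diagram, mark with $\bullet$ the last box of the first $m_\mu(j)$ rows of length $j$, and shade the last box of the last row of length $j$ when $j$ is overlined.
\item Since $\mathbf j\in\lambda$, the last row of each length carries no $\bullet$. So the shaded corners and the $\bullet$'s never collide, and the shading is a legitimate overpartition shading.
\item Conjugate the overpartition. Shaded corners go to shaded corners, and each $\bullet$ becomes a $*$ at the bottom of a column that is not the rightmost of its length. Hence the conjugate has exactly $\ell(\mu)=m$ missing integers.
\end{itemize}

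For the inverse, start from $\nu\in\overline{\mathcal P}(n,m)$ and conjugate. For each part size $j$ of $\nu'$, one copy of $j$, carrying its overline if any, goes to $\lambda$, and the remaining $m_{\nu'}(j)-1$ copies go to $\mu$. The number of these extra copies is the number of $*$'s, namely $m$.

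The main point needing care is the overline bookkeeping under conjugation. One must check that conjugation of overpartitions, as defined in Section \ref{notation}, sends the shaded last box of the last row of length $j$ in $\lambda\cup\mu$ to a column corner. It must also keep the correspondence between non-corner column bottoms and missing integers exactly as in the unbarred case. This holds because shading only ever occupies corners, while the $*$'s occupy non-corner boxes.
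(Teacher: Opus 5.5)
Your proposal is correct and follows the paper's proof. The paper also uses a sign-reversing involution extending $\varphi_m$ on pairs $(\lambda,\mu)$ with $\lambda\in\overline{\overline{\mathcal D}}$, and sends the fixed points to $\overline{\mathcal P}(n,m)$ by forming $\lambda\cup\mu$ and conjugating. The differences are cosmetic: the paper gives the weight $-w$ to singly overlined parts and makes the doubly overlined parts the overlines of $\lambda\cup\mu$, while you swap these roles, and you write out the extended involution that the paper only asserts exists.
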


\begin{proof}
As in the proof of Theorem \ref{cor2.2}, the coefficient of $q^n$ in the $q$-series expansion of the right-hand side of \eqref{eq:cor3.2} is given by
\begin{equation*}
    \displaystyle\sum_{(\lambda, \mu)\in \mathcal A(n)}\sum^{\ell(\lambda)}_{k=0} (-1)^{k} \binom{\ell(\lambda)}{k} 2^{\ell(\lambda)-k}w^{k+\ell(\mu)}
\end{equation*}

Let $$\overline{\overline{\mathcal A}}(n):=\{(\lambda, \mu)\vdash n \mid \lambda \in \overline{\overline{\mathcal D}}, \mu \in \mathcal P\}$$ and  $$\overline{\overline{\mathcal A}}(n,m):=\{(\lambda, \mu)\in \overline{\overline{\mathcal A}}(n) \mid \overline \ell(\lambda)=m-\ell(\mu)\}.$$

Then, the coefficient of $w^mq^n$ in in the  series expansion of the right hand side of \eqref{eq:cor3.2} is 
$$\#\{(\lambda, \mu)\in \overline{\overline{\mathcal A}}(n,m)\mid \overline\ell(\lambda) \text{ even}\}-\#\{(\lambda, \mu)\in \overline{\overline{\mathcal A}}(n,m)\mid \overline\ell(\lambda) \text{ odd}\}. $$ 

Let $$\overline{\overline{\mathcal E}}(n,m):=\{(\lambda, \mu)\in \overline{\overline{\mathcal A}}(n,m)\mid \ell(\mu)=m \text{ and if }  i\in \mu \text{ then } \mathbf i \in \lambda\}.$$ Note that if $(\lambda,\mu)\in \overline{\overline{\mathcal E}}(n,m)$, then $\overline\ell(\lambda)=0$. 

The involution $\varphi_m$ from the proof of Theorem \ref{cor2.2} can be extended to become an involution $\overline \varphi_m$ on $\overline{\overline{\mathcal A}}(n,m)\setminus \overline{\overline{\mathcal E}}(n,m)$ that reverses the parity of $\overline\ell(\lambda)$. 
 Hence, the coefficient of $w^mq^n$ in  the  series expansion of the right hand side of \eqref{eq:cor3.2} equals $|\overline{\overline{\mathcal E}}(n,m)|$.
We establish a bijection $$\overline \psi_m:\overline{\overline{\mathcal E}}(n,m)\to \overline{\mathcal P}(n,m).$$ Start with $(\lambda, \mu)\in \overline{\overline{\mathcal E}}(n,m)$. 
We create the overpartition $\lambda\cup \mu$ whose overlined parts are exactly the doubly overlined parts of $\lambda$.

We decorate the  Ferrers diagram of the overpartition $\lambda \cup \mu$ as in Theorem \ref{cor2.2}:  for each part $j\in \mu$ we mark  the last box of the first $m_\mu(j)$ rows of length $j$  with $\bullet$. 
The last row of a fixed length is shaded if there is a doubly overlined part of that size in $\lambda$, else the last box  remains unmarked. 
We define $\overline\psi_m(\lambda, \mu):=(\lambda\cup \mu)'$. 
As in the proof of Theorem \ref{cor2.2}, to see that $(\lambda\cup \mu)'\in \overline{\mathcal P}(n,m)$, when conjugating the decorated Ferrers diagram of $\lambda\cup \mu$,  if  the last box of a row is marked  $\bullet$, we mark   the last box of the corresponding column with $*$.    
If there is a $*$ in position $(i,j)$ in the $*$-decorated Ferrers diagram for $(\lambda\cup \mu)'$, then $j\not\in (\lambda\cup \mu)'$. 
Hence, there are exactly $\ell(\mu)=m$ missing integers in $(\lambda\cup \mu)'$ and thus $(\lambda\cup \mu)'\in \overline{\mathcal P}(n,m)$.
\end{proof}

\begin{remark} We note that the proof of 
    Theorem \ref{cor3.2} provides a combinatorial argument for the following identity. For $n,m\geq 0$,
    $$\overline P(n,m) = \#\{(\lambda, \mu)\in \overline{\overline{\mathcal A}}(n,m)\mid \overline\ell(\lambda) \text{ even}\}-\#\{(\lambda, \mu)\in \overline{\overline{\mathcal A}}(n,m)\mid \overline\ell(\lambda) \text{ odd}\}. $$
\end{remark}

\begin{remark} As in the case of ordinary partitions, the cases $m=0$ and $m=1$ reduce to proving \cite[Corollaries 3.3 and 3.4]{missing} via conjugation. 
    \end{remark}

    Finally, we adapt the combinatorial proof of Theorem \ref{cor3.2} to prove \cite[Corollary 3.5]{missing} combinatorially. We first introduce the necessary notation similar to that of \cite{missing}.

    \begin{definition} \cite{missing} We define the sets \begin{align*}\overline{\mathcal M}_e(n)& :=\{\lambda\vdash n \mid \m(\lambda) \text{ even}\}=\bigcup_{k\geq 0}\overline{\mathcal P}(n,2k)\\ \overline{\mathcal M}_o(n)& :=\{\lambda\vdash n \mid \m(\lambda) \text{ odd}\}=\bigcup_{k\geq 1}\overline{\mathcal P}(n,2k-1),\end{align*} and set $\overline M_e(n):=|\overline{\mathcal M}_{e}(n)|$ and $\overline M_o(n):=|\overline{\mathcal M}_{o}(n)|$.
    The functions $\overline{M}_e(n)$ and $\overline{M}_o(n)$ denote the number of overpartitions  of $n$ with an even/odd number of missing integers, respectively. 
\end{definition}

\begin{corollary} \cite[Corollary 3.5]{missing} We have,  \begin{equation}\label{eq:cor3.5} \sum_{n=0}^\infty\sum_{m=0}^\infty \overline P(n,m)(-1)^mq^n= \sum_{n=0}^\infty \left( \overline M_e(n)- \overline M_o(n)\right) q^n=\frac{(-3q;q)_\infty}{(-q;q)_\infty}. \end{equation}
    \end{corollary}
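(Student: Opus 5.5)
The plan is to mimic the proof of Corollary \ref{cor2.5}, now working with the doubly overlined setting of Theorem \ref{cor3.2}. First I interpret the right hand side of \eqref{eq:cor3.5}. Since $(-3q;q)_\infty=\prod_{j\ge1}(1+3q^j)$, it is the generating function for $\overline{\overline{\mathcal D}}$: each part size $i$ is either absent, or appears as $i$, $\overline i$ or $\overline{\overline i}$. Also $1/(-q;q)_\infty=\sum_{\mu\in\mathcal P}(-1)^{\ell(\mu)}q^{|\mu|}$. Hence the product is the generating function for $|\overline{\overline{\mathcal A}}_e(n)|-|\overline{\overline{\mathcal A}}_o(n)|$, where
$$\overline{\overline{\mathcal A}}_e(n):=\{(\lambda,\mu)\in\overline{\overline{\mathcal A}}(n)\mid \ell(\mu)\text{ even}\},\qquad \overline{\overline{\mathcal A}}_o(n):=\{(\lambda,\mu)\in\overline{\overline{\mathcal A}}(n)\mid \ell(\mu)\text{ odd}\}.$$

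Next I write $\overline{\overline{\mathcal A}}(n)=\bigcup_{m\ge0}\overline{\overline{\mathcal A}}(n,m)$, a disjoint union indexed by $m=\ell(\mu)+\overline\ell(\lambda)$. I set $\overline{\overline{\mathcal E}}(n):=\bigcup_{m}\overline{\overline{\mathcal E}}(n,m)$. On the complement I define $\overline\phi(\lambda,\mu):=\overline\varphi_m(\lambda,\mu)$ for the unique $m$ with $(\lambda,\mu)\in\overline{\overline{\mathcal A}}(n,m)$. This is an involution on $\overline{\overline{\mathcal A}}(n)\setminus\overline{\overline{\mathcal E}}(n)$, because each $\overline\varphi_m$ preserves $\overline{\overline{\mathcal A}}(n,m)\setminus\overline{\overline{\mathcal E}}(n,m)$.

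The key point, as in the Remark preceding Corollary \ref{cor2.5}, is that $\overline\varphi_m$ changes the parity of $\ell(\mu)$. The extension $\overline\varphi_m$ acts exactly like $\varphi_m$: it moves a single part $t$ between $\mu$ and $\lambda$, where it becomes or stops being a singly overlined part. Doubly overlined parts of $\lambda$ are treated as ordinary parts of size $i$, i.e.\ they count toward "$\mathbf i\in\lambda$" but are never moved. So each application changes $\ell(\mu)$ by exactly one.

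This step is the main obstacle, because the paper only asserts that $\varphi_m$ "can be extended." I would therefore spell out the extension explicitly. In $\varphi_m$ replace every condition "$i\notin\lambda$" by "$\mathbf i\notin\lambda$," and let $\overline u$ range only over singly overlined parts. Then I would verify three things: the map is still an involution, it stays within fixed $m$, and its fixed-point-free domain is exactly the complement of $\overline{\overline{\mathcal E}}(n,m)$. The involution check is the same case analysis as for $\varphi_m$. The element chosen, the largest eligible $t$ exceeding the largest singly overlined part, is recovered after the move as the new largest singly overlined part. Conversely, after removing $\overline u$, no eligible part of $\mu$ exceeds the new reference point other than $u$ itself.

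Once this is done, cancellation gives
$$|\overline{\overline{\mathcal A}}_e(n)|-|\overline{\overline{\mathcal A}}_o(n)|=\sum_{k}|\overline{\overline{\mathcal E}}(n,2k)|-\sum_k|\overline{\overline{\mathcal E}}(n,2k-1)|.$$
On $\overline{\overline{\mathcal E}}(n,m)$ we have $\ell(\mu)=m$, so parity in $\ell(\mu)$ agrees with parity in $m$. Finally, the bijection $\overline\psi_m:\overline{\overline{\mathcal E}}(n,m)\to\overline{\mathcal P}(n,m)$ from the proof of Theorem \ref{cor3.2} identifies these sums with $\overline M_e(n)$ and $\overline M_o(n)$, which proves \eqref{eq:cor3.5}.
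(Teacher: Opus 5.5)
Your proposal is correct and follows the paper's proof essentially step for step: the same signed interpretation of $(-3q;q)_\infty/(-q;q)_\infty$ via the parity of $\ell(\mu)$, the same involution $\overline\phi$ assembled from the $\overline\varphi_m$, and the same bijection $\overline\psi_m$ at the end. Your explicit description of $\overline\varphi_m$ usefully fills in what the paper only asserts, with one small wording fix needed in the involution check. After removing $\overline u$, other eligible parts of $\mu\cup(u)$ may exceed the new reference point; what you actually need, and what holds, is that none of them exceeds $u$, so $u$ is the maximal eligible part.
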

    \begin{proof}
The proof is similar to that of Corollary \ref{cor2.5}. We give the details for the benefit of the reader. First note that  $(-3q;q)_\infty$ is the generating function for the number of distinct doubly overlined partitions of $n$.  Let \begin{align*}
        \overline{\overline{\mathcal A}}_e(n)& :=\{(\lambda, \mu)\in \overline{\overline{\mathcal A}}(n)\mid \ell(\mu) \text{ even}\}\\ \overline{\overline{\mathcal A}}_o(n)& :=\{(\lambda, \mu)\in \overline{\overline{\mathcal A}}(n)\mid \ell(\mu) \text{ odd}\}.
    \end{align*}
Then, $\displaystyle\frac{(-3q;q)_\infty}{(-q;q)_\infty}$ is the generating function for the sequence $|\overline{\overline{\mathcal A}}_e(n)|-|\overline{\overline{\mathcal A}}_o(n)|$. We have $\overline{\overline{\mathcal A}}(n)=\ds\bigcup_{m\geq 0}\overline{\overline{\mathcal A}}(n,m)$ and we define 
 $$\overline{\overline{\mathcal E}}(n):=\bigcup_{m\geq 0}\overline{\overline{\mathcal E}}(n,m)=\{(\lambda, \mu) \in \overline{\overline{\mathcal A}}(n) \mid \overline \ell(\lambda)=0 \text{ and if }i\in \mu \text{ then }\mathbf i \in \lambda\}.$$ From the definitions, it follows that  $\overline{\overline{\mathcal A}}(n)\setminus \overline{\overline{\mathcal E}}(n)=\ds \bigcup_{m\geq 0}\overline{\overline{\mathcal A}}(n,m)\setminus \overline{\overline{\mathcal E}}(n,m)$. We define an involution $\overline\phi$ on $\overline{\overline{\mathcal A}}(n)\setminus \overline{\overline{\mathcal E}}(n)$ as follows: if $(\lambda,\mu)\in \overline{\overline{\mathcal A}}(n)\setminus \overline{\overline{\mathcal E}}(n)$ and $\ell(\mu)+\overline \ell(\lambda)=m$, then $(\lambda, \mu)\in \overline{\overline{\mathcal A}}(n,m)\setminus \overline{\overline{\mathcal E}}(n,m)$ and we set $\overline\phi(\lambda,\mu)=\overline\varphi_m(\lambda,\mu)$. Since $\overline\varphi_m$ changes the parity of $\ell(\mu)$, so does $\overline\phi$. Thus, if \begin{align*}\overline{\overline{\mathcal E}}_e(n)& :=\bigcup_{k\geq 0}\overline{\overline{\mathcal E}}(n,2k)\\ \overline{\overline{\mathcal E}}_o(n)& :=\bigcup_{k\geq 1}\overline{\overline{\mathcal E}}(n,2k-1),  \end{align*} then, $|\overline{\overline{\mathcal A}}_e(n)|-|\overline{\overline{\mathcal A}}_o(n)|= |\overline{\overline{\mathcal E}}_e(n)|-|\overline{\overline{\mathcal E}}_o(n)|$. The proof of Corollary \ref{eq:cor3.2}, shows combinatorially that $|\overline{\overline{\mathcal E}}_e(n)|=\overline M_e(n)$ and $|\overline{\overline{\mathcal E}}_o(n)|=\overline M_o(n)$.
        \end{proof}

\section{Conclusion}\label{conclusion} In  \cite{missing}, Bhoria, Eyyunni, and Santra asked for bijective proofs for several of their results. We were able to provide such proofs for the results involving identities. It is still an open problem to find combinatorial proofs of their congruences as well as an analog for overpartitions of Theorem \ref{thm2.9} \cite[Theorem 2.9]{missing}.

\bibliographystyle{siam}
\bibliography{references}
\end{document}